
\documentclass{birkjour}
\usepackage{amssymb,latexsym,amsmath, amsfonts}
\usepackage{amsthm, indentfirst}


%
 \newtheorem{thm}{Theorem}[section]

 \theoremstyle{definition}
 \newtheorem{dfn}[thm]{Definition}
 \theoremstyle{remark}
 \newtheorem{rem}[thm]{Remark}
 
 \numberwithin{equation}{section}

\def\N{{\mathbb{N}}}

\def\R{{\mathbb{R}}}

\def\Cx{{\mathbb{C}}}

\def\tr{{\mathrm{tr}\,}}

\def\cM{{\mathcal{M}}}

\def\1{\mathbf{1}}

\newcommand{\smt}{ S(\cM,\tau)}

\begin{document}

%
%
%
%
%
%
%
%
%

\title{Dixmier-type traces on semifinite symmetric spaces}

\author{Galina Levitina}

\address{Mathematical Sciences Institute,
The Australian National University,
Canberra ACT 2601, Australia
}

\email{galina.levitina@anu.edu.au}

\thanks{The work of the first author was supported by the Australian Research Council. The work of the second author was partially supported by the Theoretical Physics and Mathematics Advancement
	Foundation ``BASIS''}
\author{Alexandr Usachev}
\address{School of Mathematics and Statistics,
	Central South University, Changsha 410075, People's Republic of China}
\email{dr.alex.usachev@gmail.com}
\subjclass{Primary 47B10}

\keywords{singular trace, symmetric functional, symmetric space}

\date{January 1, 2004}
\dedicatory{To Fedor Sukochev on the occasion of his sixtieth birthday}

\begin{abstract}
We prove that a normalised linear functional on certain semifinite symmetric spaces respects tail majorisation if and only if it is a Dixmier-type trace.
\end{abstract}

\maketitle
\section{Introduction}

In 1966, J. Dixmier has discovered traces, vanishing on finite-rank operators \cite{D}. Precisely, for a positive convex increasing to infinity sequence $\{\psi(n)\}_{n=0}^\infty$ such that $\psi(2n)/\psi(n)\to 1$ as $n\to\infty$, he considered the Lorentz space
$$\mathcal M_\psi = \left\{A \ \text{is compact} : \sup_{n\ge0} \frac1{\psi(n+1)}\sum_{k=0}^n \mu(n,A) <\infty)\right\},$$
here $\{\mu(n,A)\}_{n=0}^\infty$ is the sequence of $s$-numbers of a compact operator $A$ on a separable Hilbert space. On the positive cone of this Lorentz space, Dixmier defined a functional
$${\rm Tr}_\omega(A)= \omega\left(\frac1{\psi(n+1)}\sum_{k=0}^n \mu(n,A)\right), \ 0\le A \in \mathcal M_\psi$$
and proved that it extends by linearity to a trace on the whole space, provided that $\omega$ is a dilation invariant extended limit on the space $\ell_\infty$ of all bounded sequences. Nowadays such traces are termed Dixmier traces.

In late 1980s, A. Connes employed Dixmier traces in his noncommutative geometry to construct an analogue of the integral in the noncommutative context \cite{C3, C}. Various problems in noncommutative geometry have stimulated the study of Dixmier traces. In particular, the development of semifinite noncommutative geometry unavoidably led to a semifinite counterpart of the Dixmier construction \cite{CPS2, CRSS}.

In 1998, F. Sukochev jointly with P. Dodds, B. de Pagter and E. Semenov studied fully symmetric functionals on fully symmetric spaces (Lorentz spaces are particular examples of them) \cite{DPSS}. It follows directly from the definition that Dixmier traces are fully symmetric functionals (see Definition \eqref{def_functionals} below). Theorem 3.4 of \cite{DPSS} establishes a criterion for existence of non-trivial symmetric functionals on Lorentz spaces.  
In 2011, this direction in the theory of singular traces has resulted in a joint paper of F. Sukochev with N. Kalton and A. Sedaev, showing that Dixmier traces are the only fully symmetric functionals on Lorentz spaces \cite{KSS}.

Fedor Sukochev and his co-authors were not the only group studying Dixmier (singular) traces at that period. Starting from late 80s, A. Pietsch has been developing the theory of traces in the general context of operators between Banach spaces. By mid 2010s this work gained its ultimate and elegant form \cite{P_trI, P_trII, P_trIII}. 

In 1990s, there was another direction in the study of singular traces -- the investigation of the traceability of operators \cite{Varga, GI95, AGPS}. The idea was to determine whether there exist traces taking non-zero value on a given compact operator. It turned out that this problem is equivalent to the existence of non-trivial singular traces on the principal ideal generated by this operator. 

In this note we consider a counterpart of Lorentz spaces - symmetric spaces $\mathcal I_h(\mathcal M, \tau)$, which are closed with respect to a so-called tail majorisation. As shown in \cite{CSZ}, tail majorisation is fundamental for characterisation of interpolation spaces in the the couple $(\mathcal L_0, \mathcal L_1)$. Following \cite{GI95, AGPS} we construct analogues of Dixmier traces on these spaces. Further, we prove that they are the only symmetric functionals on $\mathcal  I_h(\mathcal M, \tau)$ which respect tail majorisation. We present general semifinite version of this results.

\section{Preliminaries}

In this section, we recall main notions of the theory of noncommutative integration.

In what follows,  $H$ is a complex separable Hilbert space and $B(H)$ is the $C^*$-algebra of all bounded linear operators on $H$, equipped with the uniform norm $\|\cdot\|_\infty$, and
$\mathbf{1}$ is the identity operator on $H$.

For a self-adjoint operator $X$ in $H$, we denote by $E^X$ the spectral measure of $X$. For any closed densely-defined operator $X:\mathfrak{D}(X)\to H$ with the domain $\mathfrak{D}(X)$, the absolute value of $X$ is defined as $|X|=(X^*X)^{1/2}.$ 
%
%

%

For more details on von Neumann algebra
theory the reader is referred to e.g. \cite{Dix, KR1, KR2}
or \cite{Ta1}.

Throughout the present paper we assume that $\mathcal{M}$ is a von Neumann algebra, that is a  $*$-subalgebra  of $B(H)$ containing $\mathbf{1}$ and closed with respect to the weak operator topology. 

The collection of all (self-adjoint) projections in $\cM$ is denoted by $P(\cM)$. It is well-known (see e.g. \cite[Proposition 1.1]{Ta1}) that $P(\cM)$ is a complete lattice. The supremum (respectively, infimum) of two projection $P,Q\in P(\cM)$ is denoted by $P\vee Q$ (respectively, $P\wedge Q$). 
Two projections $P,Q\in P(\cM)$ are said to be equivalent (with respect to $\cM$), denoted by $P\sim Q$, if there exists a partial isometry $v\in\cM$ such that $P=v^*v$ and $Q=vv^*$.

A trace $\tau$ on a von Neumann algebra $\cM$ is a functional on the positive cone $\cM^+$ with values in $[0,\infty]$ which is additive, positively homogeneous and satisfies  
$$\tau(XX^*)=\tau(X^*X),\quad X\in \cM.$$
It is said that a trace $\tau:\cM^+\to [0,\infty
]$ is 
\begin{itemize}
	\item faithful if for all $X\in\cM^+$, $\tau(X)=0$ implies that $X=0$;
	\item semifinite if for every nonzero $X\in\cM^+$ there exists $0\leq Y\leq X$, such that $0<\tau(Y)<\infty$;
	\item normal if $\tau(\sup_{i\in I} X_i)=\sup_{i\in I}\tau(X_i)$ for every bounded increasing net $\{X_i\}_{i\in I}$ in $\cM^+$.
	\item finite if $\tau(X)<\infty$ for all $X\in\cM^+$.
\end{itemize}
The extension of a faithful normal semifinite trace $\tau$ on the whole algebra $\cM$ is denoted by $\tau$ too.

Throughout this paper, $\cM$ is assumed to be a semifinite von Neumann algebra equipped with a faithful normal semifinite trace $\tau$.
%

Next, we recall the basics of the theory of $\tau-$measurable operators and generalized singular value function. 
General facts concerning measurable operators may
be found in \cite{Ne, Se} (see also \cite[Chapter
IX]{Ta2}) and the upcoming book \cite{DPS}. For the convenience of the reader, some of the basic definitions are recalled here.

A linear operator $X:\mathfrak{D}\left( X\right) \rightarrow H $ is said to be {\it affiliated} with $\mathcal{M}$
if $YX\subseteq XY$ for all $Y\in \mathcal{M}^{\prime }$, where $\mathcal{M}^{\prime }$ is the commutant  of $\mathcal{M}$, defined as $\mathcal{M}^\prime=\{T\in B(H): TS=ST\quad  \forall S\in \mathcal{M}\}$. If $X$ is a self-adjoint operator affiliated with $\cM$, then $E^X(B)\in\cM$ for any Borel set $B\subset\R$. If $X$ is a closed densely defined operator affiliated with $\cM$, then the projections $n(X), r(X)$ and $s(X)$ belong to $\cM$ (see e.g. \cite[Chapter 2]{DPS}).

A closed
densely defined 
operator $X:\mathfrak{D}\left( X\right) \rightarrow H $ affiliated with $\mathcal{M}$ is termed
{\it measurable} with respect to $\mathcal{M}$ if there exists a
sequence $\left\{ P_n\right\}_{n=1}^{\infty}\subset P\left(\mathcal{M}\right)$, such
that $P_n\uparrow \mathbf{1}$, $P_n(H)\subseteq\mathfrak{D}\left(X\right) $
and $\mathbf{1}-P_n$ is a finite projection (with respect to $\mathcal{M}$)
for all $n$. It should be noted that the condition $P_{n}\left(
H\right) \subseteq \mathfrak{D}\left( X\right) $ implies that
$XP_{n}\in \mathcal{M}$. The collection of all measurable
operators with respect to $\mathcal{M}$ is denoted by $S\left(
\mathcal{M} \right)$. It is a unital $\ast $-algebra
with respect to the strong sums and products (denoted simply by $X+Y$ and $XY$ for all $X,Y\in S\left( \mathcal{M}\right) $) \cite[Corollary 5.2]{Se}.

An operator $X\in S\left( \mathcal{M}\right) $ is called $\tau-$measurable if there exists a sequence
$\left\{P_n\right\}_{n=1}^{\infty}$ in $P\left(\mathcal{M}\right)$ such that
$P_n\uparrow \mathbf{1},$ $P_n\left(H\right)\subseteq \mathfrak{D}\left(X\right)$ and
$\tau(\mathbf{1}-P_n)<\infty $ for all $n.$ The collection of all $\tau $-measurable
operators is a unital $\ast $-subalgebra of $S\left(
\mathcal{M}\right) $ and is denoted by $S\left( \mathcal{M}, \tau\right)
$ \cite[Theorem 4]{Ne}. It is well known that an operator $X$ belongs to $S\left(
\mathcal{M}, \tau\right) $ if and only if $X\in S(\mathcal{M})$
and there exists $\lambda>0$ such that $\tau(E^{|X|}(\lambda,
\infty))<\infty$ \cite[Theorem 2 (ii)]{Ne}. Alternatively, an operator $X$
affiliated with $\mathcal{M}$ is  $\tau$-measurable (see
\textbf{}\cite{FK}) if and only if
$$\tau\left(E^{|X|}\bigl(n,\infty\bigr)\right)\to 0,\quad n\to\infty.$$

%

The generalized singular value function $\mu(X):t\rightarrow \mu(t,X)$, $t>0$,  of an operator $X\in \smt$
is defined (see \cite[Definition 2.1]{FK}) by setting
\begin{equation}\label{def_mu}
	\mu(t,X)=\inf\{\|XP\|_\infty:\ P\in P(\mathcal{M}),\,  \tau(\mathbf{1}-P)\leq t\}.
\end{equation}

There exists an equivalent definition which involves the
distribution function of the operator $X$. For every
operator $X\in S(\mathcal{M},\tau),$ define the distribution function $d_X$ by setting
$$d_X(t)=\tau(E^{|X|}(t,\infty)),\quad t>0.$$
Then (see \cite[Proposition 2.2]{FK}) the singular value function $\mu(X)$ is the right-continuous inverse of $d_X(\cdot)$, that is 
$$\mu(t,X)=\inf\{s\geq0:\ d_{X}(s)\leq t\}.
$$

In particular, $\mu(X)$ is supported on $[0, \tau(\mathbf{1})]$.
For $X,Y\in S(\cM,\tau)$ we have \cite[Lemma 2.5 (v)]{FK}
$$	\mu(2t,X+Y)\leq \mu(t,X)+\mu(t,Y), \quad t>0.
$$
Note also that $\mu(0,X)=\lim_{t\downarrow 0}\mu(t,X)<\infty$ if and only if $X$ is bounded, in which case $\mu(0,X)=\|X\|_\infty$ (see e.g. \cite[Lemma 2.5]{FK}).

In the special case when $\mathcal{M}=L_\infty(0,\infty)$ is the von Neumann algebra of all
Lebesgue measurable essentially bounded functions on $(0,\infty)$ acting via multiplication on the Hilbert space
$\mathcal{H}=L_2(0,\infty)$, with the trace given by integration
with respect to Lebesgue measure $m$, the algebra $S(\cM,\tau)$ can be identified with the algebra
$$	S(0,\infty)=\{f \text{ is measurable:} \ \exists A\in\Sigma, m((0,\infty)\setminus A)<\infty, f\chi_A\in L_\infty(0,\infty)\},
$$
where $\chi_{A}$ denotes the characteristic function of a set $A\subset(0,\infty)$, and $\Sigma$ is the $\sigma$-algebra of Lebesgue measurable sets on $(0,\infty$).
In this case (see e.g. \cite[Section 2.3.2]{FK}), the singular value function $\mu(f)$ defined above is precisely the
decreasing rearrangement $f^*$ of the function $f\in S(0,\infty)$ given by
$$f^*(t)=\inf\{s\geq0:\ m(\{|f|> s\})\leq t\}.$$

If $\mathcal{M}=B(H)$ and $\tau$ is the
standard trace ${\rm tr}$, then it is not difficult to see that
$S(\mathcal{M})=S(\mathcal{M},\tau)=\mathcal{M}$ and 
for $X\in S(\mathcal{M},\tau)$ we have
$$\mu(t,X)=\mu(n,X),\quad t\in[n,n+1),\quad  n\geq0.$$
For a compact operator $X$ the sequence $\{\mu(n,X)\}_{n\geq0}$ is precisely the sequence of singular values of $X.$

The notion of singular value function naturally leads to various notions of majorisation of $\tau$-measurable operators.
For $0\leq A,B\in \smt$ we say that $A$ is majorised by $B$ (in the sense of Hardy-Littlewood) if for all  $0< t < \tau(\mathbf{1})$ we have
$$\int_0^t \mu(s,A)\,ds\leq \int_0^t \mu(s,B)\,ds.$$
In this case, we write $A\prec\prec B$. 

In the present paper we are mainly interested in another majorisation introduced in \cite{CSZ}.
\begin{dfn}For $0\leq A,B\in \smt$ we say that $A$ is \emph{tail majorised} by $B$ if for all $0< t < \tau(\mathbf{1})$ we have
	$$\int_t^{\tau(\mathbf{1})}\mu(s,A)\,ds\leq \int_t^{\tau(\mathbf{1})} \mu(s,B)\,ds.$$
	In this case, we shall write $A\prec\prec_{\rm tl}B$.
\end{dfn}

%
%

Following \cite[Section 2.4]{LSZ_book} we introduce the notion of Calkin spaces.

\begin{dfn}\label{Calkin_dfn}
	A linear subspace $E(\cM,\tau)$ of $\smt$ is called a \emph{Calkin space} if $B\in E(\cM,\tau)$, $A\in\smt$ and $\mu(A)\leq \mu(B)$ implies that $A\in E(\cM,\tau)$. In the special case when $\cM=L_\infty(0,b)$, $0< b \le \infty$ (respectively, $\cM=\ell_\infty(\N)$) we use the term Calkin function (respectively, sequence) space instead and denote $E(\cM,\tau)$ by $E(0,b)$ (respectively, $E(\N)$). 
\end{dfn}

A classical example of a Calkin space is the space $S_0(\cM,\tau)$ of all $\tau$-compact operators, defined as 
$$S_0(\cM,\tau) = \{X\in S(\cM,\tau) :  \ \mu(\infty, X)=\lim_{t\to\infty} \mu(t,X)=0\}.$$
Further examples of Calkin spaces are the noncommutative $L_p$-spaces, defined as 
$$L_p(\cM,\tau):=\{X\in \smt: \mu(X)\in L_p(0,\infty)\},\quad 0<p<\infty,$$
where $L_p(0,\infty)$ are the classical Lebesgue $L_p$-spaces.

A Calkin space is said to be fully symmetric if $A\prec\prec B\in E(\mathcal M,\tau)$ implies that $A\in E(\mathcal{M},\tau)$. It is straightforward that Lorentz spaces from Introduction are fully symmetric. We say that a Calkin space $E(\mathcal M,\tau)$ is closed with respect to the tail majorisation if $A\prec\prec_{\rm tl} B\in E(\mathcal M,\tau)$ implies that $A\in E(\mathcal{M},\tau)$.

Next we introduce the main objects of interest in the present paper, linear functionals that respect tails.  

\begin{dfn}\label{def_functionals}
	Let $E(\cM,\tau)$ be a Calkin space.
	\begin{itemize}
		\item A linear functional $\phi$ on $E(\cM,\tau)$ is said to be a \emph{symmetric functional}, if $\phi(X)=\phi(Y)$ for any $0\leq X,Y\in E(\cM,\tau)$ with $\mu(X)=\mu(Y).$
		\item A positive linear functional $\varphi$ on $E(\cM,\tau)$ is said to be \textit{fully symmetric}
			if $\varphi(A)\leq\varphi(B)$ provided that $0 \leq A, B \in E(\cM,\tau)$ are such $A \prec \prec B$.
		\item A positive linear functional $\varphi$ on $E(\cM,\tau)$ is said to be \textit{respecting tails}
		if $\varphi(A)\leq\varphi(B)$ provided that $0 \leq A, B \in E(\cM,\tau)$ are such $A \prec \prec_{\rm tl} B$.
	\end{itemize} 
\end{dfn}

	\begin{rem}\label{rem:TR_is_Sym}
		It is easy to see that every positive linear functional on $E(\cM,\tau)$ that respects tails is necessarily symmetric.
	\end{rem}
	
	On the space $L_\infty(0,b)$, $0< b \le \infty$ define the dilation operator
	$$(\sigma_{1/2} f)(t) = \begin{cases}
		f(2t), \ 0 < t < b/2\\
		0, t \ge b/2
	\end{cases}, \ f \in L_\infty(0,b).$$
	
	It follows from \cite[Proposition 2.3]{DPSS} that for every symmetric functional $\phi$ on a Calkin space $E(\cM,\tau)$ the following equality holds:
	\begin{equation}\label{SymInv}
		\phi(\sigma_{1/2} X) = \phi(X), \ X \in E(\cM,\tau),
	\end{equation}
	where $\sigma_{1/2} X$ stands for any operator such that $\mu(\sigma_{1/2} X)= \sigma_{1/2} \mu(X)$.

In the present paper we consider the following type of Calkin spaces.

\begin{dfn}\label{def_I_h}
	Let $\cM$ be a semifinite von Neumann algebra equipped with a faithful normal semifinite trace $\tau$. Let $\Omega$ denote the set of all convex decreasing functions $h$ on
	$(0,\infty)$. 
	For $h\in \Omega$, we define $\mathcal I_h(\cM,\tau)$ by setting
	$$\mathcal I_{h}(\mathcal M, \tau):= \{A \in \smt: \|A\|_{\mathcal I_h}= \sup_{t>0}  \frac{1}{h(t)} \int_t^{\tau(\mathbf{1})} \mu(s,A) \, ds < \infty\}.$$
\end{dfn}

It is clear that $\mathcal I_h(\cM,\tau)$ is a Calkin space, which is closed with respect to the tail majorisation.
At the same time, $\mathcal I_{h}(\mathcal M, \tau)$ is not fully symmetric. 
Indeed, for $g(t) = \frac2{(t+1)^2}$, $f(t) = \frac{\log(1+t)}{(t+1)^2}$, $t>e^2-1$ and $f(t)=\frac2{e^4}$, $0<t\le e^2-1$ we have $f \prec \prec g$. However,
setting $h(t)= \int_t^\infty g(s) ds$, we obtain 
$$\frac{1}{h(t)} \int_t^\infty f(s) \, ds = \frac{1}2(1+\log(1+t)) \to \infty, \ t\to \infty.$$
Thus, $f \notin \mathcal I_h(0,\infty)$.

 In the case when $h$ is a bounded function we have
$$\int_0^\infty \mu(s,A)\,ds\leq h(0)\sup_{t>0}  \frac{1}{h(t)} \int_t^\infty \mu(s,A) \, ds < \infty,$$
and therefore, $\mathcal{I}_h(\cM,\tau)\subset \mathcal L_1(\cM,\tau)$. In terminology of A. Piestch \cite[Section 3]{P_trII}  (used for principal ideals in the case $(\cM,\tau)=(B(H),\tr)$), this means that the Calkin space $\mathcal I_h(\cM,\tau)$  is of lower type.


Finally, we call a functional $\phi$ on $\mathcal I_h(\cM,\tau)$ normalised if $\phi(X) =1$ for every $X \in \smt$ such that $\mu(X) = - h'$.

\section{Main result}
In the present section we establish a form of a positive linear functional $\phi$ on $\mathcal{I}_h(\mathcal M,\tau)$, which respects tail majorisation.

We start with defining some properties of linear functionals on $L_\infty(0,b)$, $0< b \le \infty$.

\begin{dfn} \label{omega} A linear functional $\omega$ on $L_\infty(0,b)$, $0< b \le \infty$ is said to be 
	\begin{enumerate}
		\item[(i)] a \textit{state} if it is positive (i.e., $\omega(f) \ge 0$ if $f\ge0$) and normalised (in a sense that $\omega(\chi_{(0,b)})=1$);
		
		\item[(ii)] \textit{dilation invariant} if
		$\omega(f)=\omega(\sigma_{1/2} f)$ for every $f\in L_\infty(0,b)$;
		
		\item[(iii)] \textit{$h$-compatible}, for $h\in \Omega$, if 
		\begin{equation}\label{comp}
			\omega\left(t \mapsto \frac{h(2 t)}{h(t)}\right) =1.
		\end{equation}
	\end{enumerate}	
\end{dfn}

It can be shown that if the function $h$ satisfies the following condition:
\begin{equation}\label{limcond}
\lim_{t\to\infty} h(2t)/h(t)=1,
\end{equation}
then every dilation invariant state on $L_\infty(0,b)$ is $h$-compatible.
The following result constructs the analogue of Dixmier traces on $\mathcal{I}_h(\mathcal M,\tau)$. In the special case when $\mathcal M=B(H)$ (and $h\in \Omega$ satisfies condition \eqref{limcond}) this result was proved in \cite[Theorem 2.12]{AGPS}.

\begin{thm}\label{thm_tau_omega}Let $h\in \Omega$ and let $\mathcal{I}_h(\mathcal M,\tau)$ be as in Definition \ref{def_I_h}. 
	For every $h$-compatible dilation invariant state $\omega$ on $L_\infty(0,\tau(\1))$ the functional 
	$$\tau_\omega(A)= \omega\left(t \mapsto \frac{1}{h(t)}\int_t^{\tau(\1)} \mu(s,A)\,ds\right), \quad 0\le A\in \mathcal I_h(\mathcal M, \tau)$$ 
	extends to a normalised 
	linear functional on $\mathcal I_h(\mathcal M, \tau)$ that respects tails.
\end{thm}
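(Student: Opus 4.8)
The plan is to prove this in two stages: first establish that the map $A \mapsto \tau_\omega(A)$ defined on the positive cone extends to a linear functional on all of $\mathcal I_h(\mathcal M,\tau)$, and second verify that the extension respects tails and is normalised. For the first stage, since every element of $\mathcal I_h(\mathcal M,\tau)$ is a linear combination of (at most four) positive elements, it suffices by a standard argument to show that the functional $A \mapsto \tau_\omega(A)$ on the positive cone is additive and positively homogeneous; then the extension formula $\tau_\omega(A) = \tau_\omega(A_1) - \tau_\omega(A_2) + i\tau_\omega(A_3) - i\tau_\omega(A_4)$ for $A = A_1 - A_2 + i A_3 - i A_4$ is well-defined and linear. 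Positive homogeneity is immediate from $\mu(\lambda A) = \lambda \mu(A)$ for $\lambda \ge 0$ and linearity of $\omega$. The crux is \emph{additivity}: for $0 \le A, B \in \mathcal I_h(\mathcal M,\tau)$ one must show
$$\omega\!\left(t \mapsto \frac{1}{h(t)}\int_t^{\tau(\1)} \mu(s,A+B)\,ds\right) = \omega\!\left(t \mapsto \frac{1}{h(t)}\int_t^{\tau(\1)} \mu(s,A)\,ds\right) + \omega\!\left(t \mapsto \frac{1}{h(t)}\int_t^{\tau(\1)} \mu(s,B)\,ds\right).$$

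The mechanism for additivity is the two-sided estimate on the tails of $\mu(A+B)$. The sub-additivity inequality $\int_t^{\tau(\1)}\mu(s,A+B)\,ds \le \int_t^{\tau(\1)}\mu(s,A)\,ds + \int_t^{\tau(\1)}\mu(s,B)\,ds$ holds for the partial-integral functional (this is the tail analogue of the classical Hardy--Littlewood inequality and follows from $\int_0^t \mu(s,A+B)\,ds \le \int_0^t\mu(s,A)\,ds + \int_0^t\mu(s,B)\,ds$ together with $\int_0^{\tau(\1)}\mu(s,\cdot)\,ds$ being additive on the positive cone by normality of $\tau$; when $\tau(\1)=\infty$ one works with truncations). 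Applying the positive functional $\omega$ to this inequality and using its linearity gives $\tau_\omega(A+B) \le \tau_\omega(A) + \tau_\omega(B)$. For the reverse inequality one uses the dilation-type lower bound $\mu(2s, A) + \mu(2s,B) \le \mu(s, A+B)$ --- equivalently, in integrated form, $\int_{2t}^{\tau(\1)}\mu(s,A)\,ds + \int_{2t}^{\tau(\1)}\mu(s,B)\,ds \le 2\int_t^{\tau(\1)}\mu(s,A+B)\,ds$ after a change of variable --- which produces, after dividing by $h(t)$ and inserting the factor $h(2t)/h(2t)$,
$$\frac{h(2t)}{h(t)}\cdot\frac{1}{h(2t)}\int_{2t}^{\tau(\1)}\mu(s,A)\,ds \;+\; \frac{h(2t)}{h(t)}\cdot\frac{1}{h(2t)}\int_{2t}^{\tau(\1)}\mu(s,B)\,ds \;\le\; \frac{1}{h(t)}\int_t^{\tau(\1)}\mu(s,A+B)\,ds.$$
Now apply $\omega$. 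On the left-hand side, the inner functions $t\mapsto \frac{1}{h(2t)}\int_{2t}^{\tau(\1)}\mu(s,A)\,ds$ are precisely $\sigma_{1/2}$ applied to $t \mapsto \frac{1}{h(t)}\int_t^{\tau(\1)}\mu(s,A)\,ds$; the products with the factor $\frac{h(2t)}{h(t)}$ require the identity $\omega(fg) = \omega(f)$ whenever $g$ converges to $1$ at infinity and $\omega$ annihilates $L_\infty \cap C_0$-type perturbations --- more precisely one uses $h$-compatibility $\omega(t\mapsto h(2t)/h(t)) = 1$ combined with dilation invariance to move the $\sigma_{1/2}$ off. This is the step I expect to be the \textbf{main obstacle}: controlling the product of the bounded multiplier $h(2t)/h(t)$ with the $\sigma_{1/2}$-dilated tail average under $\omega$, since $\omega$ is merely linear and positive (not multiplicative) and $h(2t)/h(t)$ need not tend to a limit. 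The resolution is to sandwich: for any $\varepsilon>0$ write the multiplier's deviation from its $\omega$-value and use positivity of $\omega$ together with boundedness of the dilated average to bound the error, invoking that a dilation invariant state composed with these specific functions behaves like an extended limit; the precise bookkeeping here is the technical heart of the argument.

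Once additivity is established, $\tau_\omega$ extends to a linear functional. Positivity of the extension on the positive cone is built in. To see it \emph{respects tails}: if $0 \le A \prec\prec_{\mathrm{tl}} B$ then $\int_t^{\tau(\1)}\mu(s,A)\,ds \le \int_t^{\tau(\1)}\mu(s,B)\,ds$ for all $t$, so dividing by $h(t)>0$ and applying the positive functional $\omega$ gives $\tau_\omega(A) \le \tau_\omega(B)$ directly. For \emph{normalisation}: if $\mu(X) = -h'$ then $\int_t^{\tau(\1)}\mu(s,X)\,ds = \int_t^{\tau(\1)} -h'(s)\,ds = h(t) - h(\tau(\1))$; when $\tau(\1)=\infty$ convexity and decrease of $h$ force $\lim_{t\to\infty}h(t)=:h(\infty) \ge 0$ to be subtracted, and one checks $\frac{1}{h(t)}(h(t)-h(\infty)) = 1 - h(\infty)/h(t) \to 1$, so this function differs from the constant $\chi_{(0,\infty)}$ by an element vanishing at infinity, whence $\omega$ of it equals $\omega(\chi_{(0,\tau(\1))}) = 1$ since $\omega$ is a dilation invariant state (and such states annihilate functions vanishing at infinity); in the finite case one normalises so that the relevant constant is $1$. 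Finally one should note $\mathcal I_h(\mathcal M,\tau)$ is nontrivial and contains such an $X$, so the normalisation condition is not vacuous. Assembling these pieces yields the theorem.
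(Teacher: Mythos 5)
Your overall architecture (additivity on the positive cone, then linear extension, then the easy verifications of positivity, tail-respecting and normalisation) matches the paper's, and the treatment of tail-respecting is fine. But the additivity argument has a direction error that is fatal as written: you never prove the subadditivity $\tau_\omega(A+B)\le\tau_\omega(A)+\tau_\omega(B)$. First, your claimed ``tail sub-additivity'' $\int_t^{\tau(\1)}\mu(s,A+B)\,ds\le\int_t^{\tau(\1)}\mu(s,A)\,ds+\int_t^{\tau(\1)}\mu(s,B)\,ds$ is false: the head inequality $\int_0^t\mu(s,A+B)\,ds\le\int_0^t(\mu(s,A)+\mu(s,B))\,ds$ together with equality of the total integrals (additivity of $\tau$ on the positive cone) gives exactly the \emph{reverse} inequality for tails. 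Concretely, for two orthogonal rank-one projections one has $\mu(A)=\mu(B)=\chi_{[0,1)}$ and $\mu(A+B)=\chi_{[0,2)}$, and at $t=3/2$ your inequality reads $1/2\le 0$. What the correct reverse inequality yields, after dividing by $h(t)$ and applying $\omega$, is $\tau_\omega(A)+\tau_\omega(B)\le\tau_\omega(A+B)$ --- the superadditivity half, with no dilation machinery needed. Second, your ``reverse'' step rests on the pointwise bound $\mu(2s,A)+\mu(2s,B)\le\mu(s,A+B)$, which is also false (same two projections, $s=1/4$ gives $2\le 1$), and the integrated version you actually write, $\int_{2t}^{\tau(\1)}(\mu(s,A)+\mu(s,B))\,ds\le\int_t^{\tau(\1)}\mu(s,A+B)\,ds$, while true, is the wrong inequality: it dilates $A$ and $B$ rather than $A+B$, so pushing it through the $h$-compatibility/dilation-invariance argument again produces $\tau_\omega(A)+\tau_\omega(B)\le\tau_\omega(A+B)$. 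Both of your inequalities point the same way, and subadditivity is never established.

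The missing ingredient is the nontrivial submajorisation $\int_0^t(\mu(s,A)+\mu(s,B))\,ds\le 2\int_0^t(\sigma_{1/2}\mu(A+B))(s)\,ds=\int_0^{2t}\mu(s,A+B)\,ds$, valid for $0\le A,B$ (this is \cite[Theorem 3.3.4]{LSZ_book}, and it is where positivity of $A$ and $B$ is genuinely used). Passing to tails via the equal totals, it becomes $\int_{2t}^{\tau(\1)}\mu(s,A+B)\,ds\le\int_t^{\tau(\1)}(\mu(s,A)+\mu(s,B))\,ds$: note that it is $A+B$ that gets dilated. It is to \emph{this} left-hand side that one applies the decomposition $\frac{1}{h(t)}\int_{2t}^{\tau(\1)}=\bigl(\frac{h(2t)}{h(t)}-1\bigr)\frac{1}{h(2t)}\int_{2t}^{\tau(\1)}+\frac{1}{h(2t)}\int_{2t}^{\tau(\1)}$; the first summand is dominated in modulus by $\|A+B\|_{\mathcal I_h}\bigl(1-\frac{h(2t)}{h(t)}\bigr)$, which $\omega$ annihilates by $h$-compatibility and positivity (this disposes of your ``main obstacle'' cleanly --- no sandwich or near-multiplicativity of $\omega$ is needed), and the second summand is $\sigma_{1/2}$ of the function defining $\tau_\omega(A+B)$, which dilation invariance returns to $\tau_\omega(A+B)$. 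This closes the chain $\tau_\omega(A+B)\le\tau_\omega(A)+\tau_\omega(B)\le\tau_\omega(A+B)$. Two smaller points: your assertion that dilation invariant states annihilate functions vanishing at infinity is false in general (such a state may be supported at zero, cf.\ Definition \ref{omega_supp}), and your normalisation computation implicitly assumes $h(\tau(\1))=0$ or $h(t)\to\infty$ as $t\to 0$; the honest statement is $\tau_\omega(X)=\omega\bigl(t\mapsto 1-h(\tau(\1))/h(t)\bigr)$ for $\mu(X)=-h'$.
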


\begin{proof}
It follows from \eqref{def_mu} that $\mu(\lambda A)=|\lambda|\mu(A)$ for any $A\in S(\cM,\tau)$ and any $\lambda\in \Cx$. Hence, $\tau_\omega$ is positively homogeneous. We shall show its additivity on the positive cone of $\mathcal I_{h}(\mathcal M, \tau)$. Let $0\le A,B\in \mathcal I_{h}(\mathcal M, \tau)$. By \cite[Theorem 3.3.3 and Theorem 3.3.4]{LSZ_book} for any $0\leq t\leq \tau(\1)$ we have that
	\begin{equation}\label{eq1}
		\int_0^{t} \mu(s,A+B)\,ds \le \int_0^t (\mu(s,A)+\mu(s,B))\, ds \le 2\int_0^t (\sigma_{1/2}\mu(A+B))(s)\, ds.
	\end{equation} 
	Since $A$ and $B$ are positive and since $\sigma_{1/2}\mu(s,T)=\mu(2s,T)=0$ for any $s>\frac{\tau(\1)}{2}$ and $T\in S(\cM,\tau)$, it follows that 
	\begin{align*}
		\int_0^{\tau(\1)} \mu(s,A+B)\,ds&=\|A+B\|_1=\tau(A+B)=\tau(A)+\tau(B)\\
		&=\int_0^{\tau(\1)}\mu(s,A)ds+\int_0^{\tau(\1)}\mu(s,B)ds,
	\end{align*}
	and
	\begin{align*}
		2\int_0^{\tau(\1)}\sigma_{1/2}\mu(s,A+B)ds&=2\int_0^{\tau(\1)/2}\mu(2s,A+B)ds\\
		&=\int_0^{\tau(\1)}\mu(s,A+B)ds=\|A+B\|_1,
	\end{align*}
	and so 
	\begin{align*}
		\int_0^{\tau(\1)} \mu(s,A+B)\,ds &= \int_0^{\tau(\1)} (\mu(s,A)+\mu(s,B))\, ds\\
		&=2\int_0^{\tau(\1)} (\sigma_{1/2}\mu(A+B))(s)\, ds.
	\end{align*}
	Combining the latter inequality with \eqref{eq1} we obtain that 
	\begin{align*}
		2\int_t^{\tau(\1)} (\sigma_{1/2}\mu(A+B))(s)\, ds &\le \int_t^{\tau(\1)} (\mu(s,A)+\mu(s,B))\, ds \\
		&\le \int_t^{\tau(\1)} \mu(s,A+B)\,ds
	\end{align*}
	for every $0\leq t\leq \tau(\1)$.
	
	Dividing all parts by $h(t)$ and applying $\omega$ we obtain
	\begin{align}\label{eq2}
		\begin{split}
			\omega\left(t \mapsto \frac{2}{h(t)}\int_t^{\tau(\1)} (\sigma_{1/2}\mu(A+B))(s)\, ds\right)\\
			 \le \omega\left(t \mapsto \frac{1}{h(t)}\int_t^{\tau(\1)} (\mu(s,A)+\mu(s,B))\, ds\right)\\ 
			 \le \omega\left(t \mapsto \frac{1}{h(t)}\int_t^{\tau(\1)} \mu(s,A+B)\,ds\right).
		\end{split}
	\end{align}

	Since $\mu(T)$ is supported on $[0,\tau(\1)]$ for any $T\in  S(\cM,\tau)$, it follows that  $\sigma_{1/2}\mu(T)$ is supported on $[0, \frac{\tau(\1)}{2}]$. Therefore, the function   $$t\mapsto \frac{2}{h(t)}\int_t^{\tau(\1)} (\sigma_{1/2}\mu(A+B))(s)\, ds$$ is supported on $[0, \frac{\tau(\1)}{2}]$. Hence, noting that $t\in [0, \frac{\tau(\1)}{2}]$, for the left-hand side of \eqref{eq2} we have 
	\begin{align*}
		\omega &\left(t \mapsto \frac{2}{h(t)}\int_t^{\tau(\1)} (\sigma_{1/2}\mu(A+B))(s)\, ds\right)\\
		& =\omega\left(t \mapsto \frac{1}{h(t)}\int_{2t}^{\tau(\1)} \mu(s,A+B) \, ds\right)\\
		& =
		\omega\left(t \mapsto \left(\frac{h(2t)}{h(t)}-1\right)\frac{1}{h(2t)}\int_{2t}^{\tau(\1)} \mu(s,A+B) \, ds\right)\\
		& \hspace{15mm} +\omega\left(t \mapsto \frac{1}{h(2t)}\int_{2t}^{\tau(\1)} \mu(s,A+B) \, ds\right)
	\end{align*}

	Using $h-$compatibility of $\omega$ we obtain
	$$ \omega\left(t \mapsto \left(\frac{h(2t)}{h(t)}-1\right)\right)=0$$
	and (since $\omega$ is positive)
	$$ \omega\left(t \mapsto \left(\frac{h(2t)}{h(t)}-1\right)\frac{1}{h(2t)}\int_{2t}^{\tau(\1)} \mu(s,A+B) \,ds\right)=0.$$
	Using dilation invariance of $\omega$ we conclude that
	\begin{align*}
		\omega & \left(t \mapsto \frac{2}{h(t)}\int_t^{\tau(\1)}(\sigma_{1/2}\mu(A+B))(s)\, ds\right)\\
		& \hspace{10mm} = \omega\left(t \mapsto \frac{1}{h(t)}\int_t^{\tau(\1)} \mu(s,A+B)\,ds\right).
	\end{align*}
	
	Referring to \eqref{eq2} we conclude that 
	$$\tau_{\omega}(A+B)\le\tau_{\omega}(A)+\tau_{\omega}(B)\le\tau_{\omega}(A+B),$$
	that is $\tau_\omega$ is linear on the positive cone of $\mathcal I_h(\mathcal M, \tau)$.
	Hence, it extends to a  linear functional on $\mathcal I_h(\mathcal M, \tau)$. The fact that the extended functional respects tails is straightforward.
\end{proof}

In the special case, when the von Neumann algebra is either atomless or atomic with atoms of equal trace we have the converse result that any functional on $\mathcal I_h(\mathcal M, \tau)$ which respect tails has the form $\tau_\omega$ as in Theorem \ref{thm_tau_omega}.

\begin{thm}\label{thm_main} Let $\mathcal M$ be a atomless (or atomic with atoms of equal trace)
	von Neumann algebra equipped with a faithful normal semifinite trace $\tau$. Let $h\in \Omega$ and let $\mathcal{I}_h(\mathcal M,\tau)$ be as in Definition \ref{def_I_h}. If $\varphi$ is a  normalized linear functional on $\mathcal I_h(\mathcal M, \tau)$ that respects tails, then 
	$$\varphi(A)=\tau_\omega= \omega\left(\frac{1}{h(t)}\int_t^{\tau(\1)} \mu(s,A)\,ds\right), \quad 0\le A\in \mathcal I_h(\mathcal M, \tau)$$
	for some $h$-compatible dilation invariant state $\omega$ on $L_\infty(0,{\tau(\1)})$.
\end{thm}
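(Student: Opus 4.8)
The plan is to reconstruct the state $\omega$ from $\varphi$ by dualising the formula in Theorem \ref{thm_tau_omega}. First I would fix a ``reference'' element whose tail integral is comparable to $h$: since $-h'$ is a positive decreasing function (as $h\in\Omega$ is convex decreasing), the hypothesis that $\cM$ is atomless or atomic with atoms of equal trace guarantees the existence of $R\in\smt$ with $\mu(R)=-h'$, and for such $R$ we have $\int_t^{\tau(\1)}\mu(s,R)\,ds=h(t)-h(\tau(\1))$ (or $h(t)$ when $\tau(\1)=\infty$), so $R\in\mathcal I_h(\cM,\tau)$ and $\varphi(R)=1$ by normalisation. For a general positive $A\in\mathcal I_h(\cM,\tau)$ write $g_A(t)=\frac1{h(t)}\int_t^{\tau(\1)}\mu(s,A)\,ds$; this is a bounded nonnegative function on $(0,\tau(\1))$ by definition of the norm. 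The goal is to produce a state $\omega$ on $L_\infty(0,\tau(\1))$ with $\varphi(A)=\omega(g_A)$ for all such $A$.

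The key step is to show that $\varphi(A)$ depends on $A$ only through $g_A$, and in a way that is linear and monotone in $g_A$, so that it factors through the linear span of $\{g_A: 0\le A\in\mathcal I_h\}$ and extends to a positive functional on $L_\infty$. Monotonicity in $g_A$ is exactly where ``respecting tails'' enters: if $g_A\le g_B$ pointwise then $\int_t^{\tau(\1)}\mu(s,A)\,ds\le\int_t^{\tau(\1)}\mu(s,B)\,ds$ for all $t$, i.e. $A\prec\prec_{\mathrm{tl}}B$, hence $\varphi(A)\le\varphi(B)$. To get additivity of the functional $g_A\mapsto\varphi(A)$ I would reverse the computation in the proof of Theorem \ref{thm_tau_omega}: for positive $A,B$ one has the pointwise identity $g_{A+B}+(\text{something supported with a }\sigma_{1/2})=g_A+g_B$ only up to the dilation/$h$-compatibility correction, so the cleanest route is to first define $\omega$ abstractly on the subspace $V=\operatorname{span}\{g_A\}\subseteq L_\infty(0,\tau(\1))$ by $\omega_0(g_A-g_B):=\varphi(A)-\varphi(B)$, check this is well defined (if $g_A-g_B=g_C-g_D$ then $A+D\prec\prec_{\mathrm{tl}}B+C$ and vice versa, giving $\varphi(A)+\varphi(D)=\varphi(B)+\varphi(C)$ using Remark \ref{rem:TR_is_Sym} together with respecting tails), positive and bounded (since $0\le g_A\le\|A\|_{\mathcal I_h}\chi_{(0,\tau(\1))}$, monotonicity gives $0\le\omega_0(g_A)\le\|A\|_{\mathcal I_h}\varphi(R)=\|A\|_{\mathcal I_h}$, so $\omega_0$ is bounded with respect to the sup norm on $V$), and normalised ($g_R\equiv 1$ on $(0,\tau(\1))$ up to the harmless constant $h(\tau(\1))/h(t)$ term when $\tau(\1)<\infty$; one checks $\chi_{(0,\tau(\1))}\in V$). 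Then extend $\omega_0$ to a state $\omega$ on $L_\infty(0,\tau(\1))$ by Hahn--Banach plus the standard positivity argument for functionals dominated by the sup norm.

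It remains to verify that the extended $\omega$ is dilation invariant and $h$-compatible. Dilation invariance should follow from \eqref{SymInv}: for $0\le A\in\mathcal I_h$ pick $A'$ with $\mu(A')=\sigma_{1/2}\mu(A)$; then $\sigma_{1/2}g_A$ is, up to the $h(2t)/h(t)$ factor, equal to $g_{A'}$, and one pushes the $h$-compatibility correction through exactly as in the additivity argument of Theorem \ref{thm_tau_omega}, using that $\varphi$ is symmetric hence $\varphi(A')=\varphi(A)$; this identifies $\omega(\sigma_{1/2}g)=\omega(g)$ first on $V$, then on all of $L_\infty$ since $V$ is sup-norm dense enough — here one uses that $\sigma_{1/2}$ on $L_\infty$ is approximated in the relevant sense. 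For $h$-compatibility, the function $t\mapsto h(2t)/h(t)$ is bounded, and one checks directly that it lies in $V$ (it equals $g_{A}$ for an $A$ with $\mu(A)$ chosen so that $\int_t^{\tau(\1)}\mu(s,A)\,ds=h(2t)$, namely $\mu(A)=-2h'(2\cdot)$, which is positive decreasing hence realisable under the atomlessness hypothesis), and its $\omega$-value is forced by normalisation combined with dilation invariance. I expect the main obstacle to be the well-definedness and boundedness of $\omega_0$ on $V$ together with the density/approximation argument needed to transport dilation invariance and $h$-compatibility from $V$ to all of $L_\infty(0,\tau(\1))$ — in particular handling the case $\tau(\1)<\infty$, where the constant $h(\tau(\1))$ tails produce extra bounded terms that must be shown to lie in $V$ and to be respected by $\omega$; the realisability of prescribed singular value functions is precisely what fails without the structural hypothesis on $\cM$, which is why that hypothesis appears.
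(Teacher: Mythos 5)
Your overall architecture coincides with the paper's: transfer $\varphi$ to the function level, define a functional $\omega_0$ on the image of the tail transform $x\mapsto \frac1{h(\cdot)}\int_{\cdot}^{\tau(\1)}x(s)\,ds$, bound it by the sup norm using normalisation together with the tail-respecting property, extend by Hahn--Banach, and then check dilation invariance and $h$-compatibility. However, two of your steps do not work as stated.

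First, the well-definedness of $\omega_0$. You argue that $g_A-g_B=g_C-g_D$ forces $A+D\prec\prec_{\rm tl}B+C$ \emph{and vice versa}. This is false in general: the hypothesis gives $\mu(A)+\mu(D)=\mu(B)+\mu(C)$ pointwise, but $\mu(A+D)\neq\mu(A)+\mu(D)$; one only gets $\int_t^{\tau(\1)}\mu(s,A+D)\,ds\geq\int_t^{\tau(\1)}(\mu(s,A)+\mu(s,D))\,ds$ (the tail inequality goes the \emph{opposite} way to Hardy--Littlewood head majorisation), and the same one-sided bound for $B+C$, so the two sums cannot be compared against each other. The paper avoids this entirely: since $\cM$ is atomless (or atomic with equal atoms), \cite[Theorem 4.4.1]{LSZ_book} gives a \emph{linear} symmetric functional $\varphi_0$ on $\mathcal I_h(0,\tau(\1))$ with $\varphi=\varphi_0\circ\mu$; then $\omega_0(Tx):=\varphi_0(x)$ is well defined simply because $T$ is injective on the span $D_h$ of decreasing functions, and linearity is inherited from $\varphi_0$. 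You need this commutative lifting step (it is also where the structural hypothesis on $\cM$ is actually used, not merely for realisability of prescribed singular value functions).

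Second, transporting dilation invariance from $V=TD_h$ to all of $L_\infty(0,\tau(\1))$ cannot be done by a density argument: $TD_h$ is very far from sup-norm dense in $L_\infty$, and a bounded functional prescribed on a non-dense subspace has many non-invariant extensions. The correct tool, used in the paper, is the \emph{invariant form} of the Hahn--Banach theorem (\cite[Theorem 3.3.1]{Edwards}): one first establishes $h$-compatibility of $\omega$ on $TD_h$ (via $\omega(h(2\cdot)/h(\cdot))=\varphi_0(-2h'(2\cdot))=\varphi_0(-h')=1$, using \eqref{SymInv}), deduces $\omega(|1-h(2t)/h(t)|)=0$, then uses the pointwise estimate
$$|\sigma_{1/2}Tx-T(2\sigma_{1/2}x)|\leq\Bigl(1-\frac{h(2t)}{h(t)}\Bigr)\|x\|_{\mathcal I_h}$$
to show $\omega$ is $\sigma_{1/2}$-invariant on the smallest $\sigma_{1/2}$-invariant subspace containing $TD_h$, and only then extends invariantly to all of $L_\infty$. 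Note the logical order is the reverse of what you propose: $h$-compatibility is needed \emph{before} dilation invariance, not deduced from it. With these two repairs your outline becomes the paper's proof.
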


\begin{proof}
	By Remark \ref{rem:TR_is_Sym} any linear functional which respects tails is necessarily a symmetric functional. 
	Since $\cM$ is either atomless or atomic algebra with atoms of equal trace, it follows that there is a bijective correspondence between all symmetric functionals on $\mathcal I_h(\mathcal M, \tau)$ and all symmetric functionals on $\mathcal I_h(0, \tau(\1))$ \cite[Theorem 4.4.1]{LSZ_book}. Here, in the case of atomic algebra with atoms of equal trace, we isometrically embed the space $\ell_\infty(\N)$ into $L_\infty(0,\infty)$. Let $\varphi_0$ on $\mathcal I_h(0, \tau(\1))$ be the corresponding functional, that is a symmetric functional such that  $\varphi=\varphi_0 \circ \mu$.
	
	We firstly note that $\varphi_0$ respects tails. Indeed, let $f,g\in \mathcal I_h(0, \tau(\1))$ be such that $f\prec\prec_{\rm tl} g$. Then, there exist $A,B\in \smt$, such that $\mu(A)=f^*$ and $\mu(B)=g^*$. In particular, $A,B\in\mathcal I_h(\mathcal M, \tau)$ and $A\prec\prec_{\rm tl} B$. Since $\varphi$ respects tails, it follows that 
	\begin{align*}
		\varphi_0(f)&=\varphi_0(f^*)=\varphi_0(\mu(A))=\varphi(A)\leq \varphi(B)=\varphi_0(\mu(B))=\varphi_0(g),
	\end{align*}
	showing that $\varphi_0$ respect tails.

	Let $D_h$ be the linear span of all positive non-increasing functions from $\mathcal I_h(0, \tau(\1))$. Observe that $x\in D_h$ if and only if $x=y^*-z^*$, for $y,z \in \mathcal I_h(0, \tau(\1))$.	
	
	Define a map $T:D_h\to L_\infty(0,{\tau(\1)})$ by the formula
	$$(Tx)(t)=\frac1{h(t)}\int_t^{\tau(\1)} x(s)ds.$$
	Since $T$ is injection, one can define a linear functional $\omega_0$ on $TD_h$ by the formula $\omega_0(Tx)=\varphi_0(x).$
	
	Choose $f \in TD_h$ and, so, $x\in D_h$ such that $f(t)=\frac1{h(t)}\int_t^{\tau(\1)} x(s)ds.$ We have  
	$$ \int_t^{\tau(\1)} x(s)\,ds \le\left|\int_t^{\tau(\1)} x(s)\,ds\right| \le \|x\|_{\mathcal I_h} \cdot\int_t^{\tau(\1)} (-h'(s))\,ds.$$
	
	Since $x\in D_h$, then it is of the form $x=y^*-z^*$ for some $y,z \in \mathcal I_h(0, \tau(\1))$. Thus,
	$$ \int_t^{\tau(\1)} y^*(s)\,ds \le \int_t^{\tau(\1)} (-\|x\|_{\mathcal I_h}h'(s)+z^*(s))\,ds.$$
	
	Since $\varphi_0$ is normalized tail-respecting functional, it follows that
	$$\varphi_0(y^*)=\varphi_0(y)\le \|x\|_{\mathcal I_h} \varphi_0(-h')+ \varphi_0(z)=\|x\|_{\mathcal I_h} + \varphi_0(z^*)$$ and, 
	$\varphi_0(x)\le \|x\|_{\mathcal I_h}.$ So, $\omega_0(f)\le \|f\|_\infty$. By Hahn-Banach Theorem there exists $\omega$ on $L_\infty(0,{\tau(\1)})$ such that it coincides with $\omega_0$ on $TD_h$ and $\omega(f)\le \|f\|_\infty$ for every $f\in L_\infty(0,{\tau(\1)})$.
	
	Note, that the function $t\mapsto -2h'(2t)$ belongs to $D_h$. Due to \eqref{SymInv}, we have 
	$$\omega\left(t\mapsto \frac{h(2t)}{h(t)}\right)=\varphi_0(t\mapsto -2h'(2t))=\varphi_0(-h')=1.$$ 
	That is, $\omega$ is $h$-compatible. Hence,
	$\omega(|\frac{h(2t)}{h(t)}-1|)=0$.
	
	Let $R$ be the smallest $\sigma_{1/2}$-invariant subspace of $L_\infty(0,{\tau(\1)})$, containing $TD_h$. We claim that $\omega$ is $\sigma_{1/2}$-invariant on $R$. It is sufficient to show that $\omega(\sigma_{1/2} f)=\omega(f)$ for every $f\in TD_h$ or, equivalently, $\omega(\sigma_{1/2} Tx)=\omega(Tx)$ for every $x\in D_h$ or, (in view of definition of $\omega$ on $TD_h$ and  \eqref{SymInv}) equivalently, $\omega(\sigma_{1/2} Tx)=\omega(T(2\sigma_{1/2}x))$ for every $x\in D_h$.
	Since
	\begin{align*} |\sigma_{1/2} Tx-T(2\sigma_{1/2}x)| &=\left(1 - \frac{h(2t)}{h(t)}\right) \frac{1}{h(2t)}\int_{2t}^{\tau(\1)} x(u)\,du\\
		&\le  \left(1 - \frac{h(2t)}{h(t)}\right) \|x\|_{\mathcal I_h},
	\end{align*}
	we obtain the claim. Now, using the invariant form of the Hahn-Banach Theorem (see, e.g. \cite[Theorem 3.3.1]{Edwards}) we can extend $\omega$ (preserving the norm) to a $\sigma_{1/2}$-invariant functional on the whole $L_\infty(0,{\tau(\1)})$ (we shall denote it by the same letter).
	Since $\omega(\chi_{(0,{\tau(\1)})})=\omega_0(T(-h'))=1$, we conclude that $\omega$ is normalised.
	Thus, $\omega$ is a $h$-compatible dilation invariant state on $L_\infty(0,{\tau(\1)})$. 
\end{proof}

%

In conclusion we discuss existence of symmetric functionals  on $\mathcal I_h(\cM,\tau)$ which are supported at zero or at infinity. For that we firstly recall  necessary definitions.
	\begin{dfn}\label{def_singular}
	Let $E(\cM,\tau)$ be a Calkin space.
	\begin{itemize}
	
		\item A symmetric functional $\phi$ on $E(\cM,\tau)$ is said to be \textit{supported at infinity} if $\phi(X E^{|X|}(a,+\infty))=0$ for every $a>0$ and every $X \in E(\cM,\tau)$.
		\item A symmetric functional $\phi$ on $E(\cM,\tau)$ is said to be \textit{supported at zero} if $\phi(X E^{|X|}(0,a))=0$ for every $a>0$ and every $X \in E(\cM,\tau)$.
			\item A symmetric functional $\varphi$ on $E(\cM,\tau)$ is said to be \textit{singular} if $\phi(X)=0$ for any $X\in E(\cM,\tau)\cap \cM$ such that $\tau(s(X))<\infty$. 
		
	\end{itemize} 
\end{dfn}

	It follows from \cite[Theorem 4.3.2]{GI95} that every positive singular symmetric functional on a Calkin space has a unique decomposition as a sum of a positive symmetric functional supported at infinity and that supported at zero.
		
\begin{dfn} \label{omega_supp} (i) A linear functional $\omega$ on $L_\infty(0,b)$, $0< b \le \infty$ is said to be 
		\textit{supported at zero} if  $\omega(\chi_{(a,b)})=0$ for every $0<a\le b$. 
		
		(ii) A linear functional $\omega$ on $L_\infty(0,\infty)$ is said to be 
		\textit{supported at infinity} if  $\omega(\chi_{(0,a)})=0$ for every $0<a < \infty$.
\end{dfn}
	
It follows from Theorem \ref{thm_tau_omega} and \ref{thm_main} that a symmetric functional $\tau_\omega$ on $\mathcal I_h(\cM,\tau)$ which respect tails is supported at zero if and only if he corresponding state $\omega$ is supported at zero. If, in addition, the trace $\tau$ is infinite, these theorems imply that the functional $\tau_\omega$ is supported at infinity if and only if $\omega$ is supported at infinity. 

	The following result proves a criterion for the existence of tail-respecting functionals on spaces $\mathcal I_h(\cM,\tau)$.
	
	\begin{thm}\label{thm.symm_existence}Let $\mathcal M$ be a atomless (or atomic with atoms of equal trace)
	von Neumann algebra equipped with a faithful normal semifinite trace $\tau$ and let $h \in \Omega$.
	
	\begin{enumerate}
	\item The space $\mathcal I_h(\cM,\tau)$ admits non-zero tail-respecting functionals supported at infinity
		 if and only if  $\lim_{t \to \infty}  h(t) = 0$ and
		\begin{equation}\label{limcond1}
			\limsup_{t\to\infty} \frac{h(2 t)}{h(t)} =1.
		\end{equation}
		
		\item The space $\mathcal I_h(\cM,\tau)$ admits non-zero tail-respecting functionals supported at zero if and only if 
		$\lim_{t\to0} h(t) = \infty$ and 
		\begin{equation}\label{limcond2}
			\limsup_{t\to0} \frac{h(2 t)}{h(t)} =1.
		\end{equation}
\end{enumerate}

	\end{thm}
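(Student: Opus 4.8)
The plan is to reduce the statement, via Theorems \ref{thm_tau_omega} and \ref{thm_main} and the discussion following Definition \ref{omega_supp}, to a question about states on $L_\infty(0,\tau(\1))$: once the appropriate limit condition on $h$ is in force, $\mathcal I_h(\cM,\tau)$ admits a non-zero tail-respecting functional supported at infinity (resp.\ at zero) if and only if $L_\infty(0,\tau(\1))$ carries an $h$-compatible dilation invariant state $\omega$ that is supported at infinity (resp.\ at zero); for part (1) one works with $\tau(\1)=\infty$, as is implicit for functionals supported at infinity. For the \emph{if} direction this is Theorem \ref{thm_tau_omega}: the resulting $\tau_\omega$ is supported at infinity (resp.\ zero) by the discussion after Definition \ref{omega_supp}, and it is non-zero because, with $\mu(D)=-h'$, the function $t\mapsto\frac1{h(t)}\int_t^{\tau(\1)}\mu(s,D)\,ds$ is identically $1$ when $\lim_{t\to\infty}h=0$ (resp.\ tends to $1$ at $0$ when $\lim_{t\to0}h=\infty$), so $\tau_\omega(D)=1$. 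For the \emph{only if} direction, a non-zero positive tail-respecting $\varphi$ with the support property must be a positive multiple of a normalised one — in case (1) this follows from $\mu(A)\prec\prec_{\rm tl}\|A\|_{\mathcal I_h}(-h')$ (valid since $\lim_{t\to\infty}h=0$), forcing $\varphi(D)>0$; in case (2) one uses that a supported-at-zero functional kills bounded operators, so there is a decreasing $f_0$ supported near $0$ with $\varphi(f_0)>0$, and $f_0\prec\prec_{\rm tl}c\,(-h')$ for a suitable constant $c$. Then Theorem \ref{thm_main} writes $\varphi/\varphi(D)$ as $\tau_\omega$, and the support of $\varphi$ passes to $\omega$.

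One must first dispose of the limit conditions, by a direct argument. If $\lim_{t\to\infty}h(t)=c>0$, then every $A\in\mathcal I_h(\cM,\tau)$ has $\int_1^\infty\mu(s,A)\,ds<\infty$, so $\frac1{h(t)}\int_t^\infty\mu(s,A)\,ds\to0$ as $t\to\infty$; from this one checks $\|AE^{|A|}(0,a)\|_{\mathcal I_h}\to0$ as $a\to0$, and since positive functionals on $\mathcal I_h(0,\infty)$ (a Banach function space) are norm-continuous, the relation $\varphi(AE^{|A|}(a,\infty))=0$ forces $\varphi\equiv0$. Hence $\lim_{t\to\infty}h=0$ is necessary in (1); the mirror argument (using $\mathcal I_h\subset\mathcal L_1$ when $h$ is bounded) gives $\lim_{t\to0}h=\infty$ in (2). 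We may thus assume $h>0$ throughout, so $r(t):=h(2t)/h(t)\in(0,1]$ is a genuine bounded function. Necessity of $\limsup r=1$ is then immediate from the reduction: if $\omega$ is a state supported at infinity (resp.\ at zero) and $\limsup r=\alpha$, split $r$ into its restrictions to $(0,T)$ and $(T,\infty)$ (resp.\ to $(\delta,\tau(\1))$) with $r\leq\alpha+\varepsilon$ on the second; positivity and the support of $\omega$ give $1=\omega(r)\leq\alpha+\varepsilon$, hence $\alpha\geq1$, i.e.\ $\limsup r=1$.

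For sufficiency I would construct $\omega$ by an invariant Hahn--Banach argument, exactly as in the proof of Theorem \ref{thm_main} and of \cite{GI95,AGPS}. Put $g:=1-r\geq0$ and define the sublinear functional $p(f):=\inf_A\limsup_{t\to\infty}(Af)(t)$ (resp.\ with $\limsup_{t\to0}$), the infimum over finite convex combinations $A$ of the dilations $\sigma_{1/2}^k$, $k\geq0$ — sublinearity using that these commute and that $\limsup_t(A(A'f))(t)\leq\limsup_t(A'f)(t)$. On the span of the constant function together with the $\chi_{(0,a)}$ (resp.\ $\chi_{(a,\tau(\1))}$), $g$ and the $\sigma_{1/2}^kg$, set $\omega_0$ equal to $1$ on the constant function and $0$ on the remaining generators; this is well defined, and verifying $\omega_0\leq p$ on this span reduces to the inequalities $p(-\sigma_{1/2}^kg)\geq0$, that is,
\[
\liminf_{t}\frac1N\sum_{j=0}^{N-1}g(2^jt)=0\qquad\text{for every }N\geq1
\]
(the liminf as $t\to\infty$, resp.\ $t\to0$). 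Granting this, a Hahn--Banach extension gives $\omega\leq p$ on $L_\infty(0,\tau(\1))$, and $\omega$ is then automatically a state (from $p(f)\leq\limsup_t f(t)$ and $\omega_0$ of the constant function being $1$), dilation invariant ($p(f-\sigma_{1/2}f)\leq0$ by telescoping of averages), supported at infinity (resp.\ at zero), since $p(\pm\chi_{(0,a)})=0$ (resp.\ $p(\pm\chi_{(a,\tau(\1))})=0$), and $h$-compatible, since $\omega(g)=0$. Together with the reduction and the necessity, this proves (1); (2) is identical.

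The step I expect to be the main obstacle is the averaged estimate displayed above. Using $1-x\leq-\log x$ for $x\in(0,1]$ and telescoping, $\frac1N\sum_{j=0}^{N-1}g(2^jt)\leq\frac1N\log\frac{h(t)}{h(2^Nt)}$, so it suffices to prove $\liminf_t\frac{h(t)}{h(2^Nt)}=1$ for every $N$. Here convexity of $h$ enters decisively: since $|h'|$ is non-increasing, the substitution $u=2v$ yields $h(2t)-h(4t)=2\int_t^{2t}|h'(2v)|\,dv\leq2\int_t^{2t}|h'(v)|\,dv=2\bigl(h(t)-h(2t)\bigr)$, whence $1-r(2t)\leq2(1-r(t))/r(t)$ for all $t>0$. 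Choosing $t_j$ with $r(t_j)\to1$ (possible since $\limsup r=1$), this inequality propagates to $r(2^kt_j)\to1$ for each fixed $k$, so $\frac{h(t_j)}{h(2^Nt_j)}=\prod_{k=0}^{N-1}r(2^kt_j)^{-1}\to1$; since $\frac{h(t)}{h(2^Nt)}\geq1$ always, the liminf is $1$. Once this propagation lemma is in hand the remaining verifications are routine, and the theorem follows.
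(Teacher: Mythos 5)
Your overall architecture matches the paper's: reduce via Theorems \ref{thm_tau_omega} and \ref{thm_main} (and the remarks after Definition \ref{omega_supp}) to the existence of an $h$-compatible dilation invariant state supported at infinity (resp.\ at zero); get necessity of the two conditions from $\omega(f)\le\limsup f$; construct $\omega$ for sufficiency. Your necessity arguments are sound, and you are in fact more careful than the paper in reducing a general non-zero tail-respecting functional with the given support to a normalised one (via $\mu(A)\prec\prec_{\rm tl}\|A\|_{\mathcal I_h}(-h')$, resp.\ the truncation argument near zero). Where you genuinely diverge is sufficiency. The paper upgrades \eqref{limcond1} to $\limsup_{t\to\infty}h(2^nt)/h(t)=1$ by noting that $\alpha(s)=\limsup_{t\to\infty}h(st)/h(t)$ is convex and decreasing with $\alpha(1)=\alpha(2)=1$, extracts intervals $[t_n,2^nt_n]$ on which $h(2t)/h(t)$ is uniformly close to $1$, and then outsources the construction of $\omega$ to \cite[Theorem 17]{SUZ1}. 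You instead prove the propagation inequality $1-r(2t)\le 2(1-r(t))/r(t)$ directly from the monotonicity of $|h'|$ (the computation $h(2t)-h(4t)=2\int_t^{2t}|h'(2v)|\,dv\le 2(h(t)-h(2t))$ is correct), obtain a single sequence $t_j$ along which $g(2^kt_j)\to0$ for every fixed $k$, and run the invariant Hahn--Banach argument yourself. This is self-contained and more elementary; the paper's route is shorter only because the functional-analytic work is hidden in \cite{SUZ1}.

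One step of your construction is misstated, though repairable from your own ingredients. Verifying $\omega_0\le p$ on the initial subspace does \emph{not} reduce to $p(-\sigma_{1/2}^kg)\ge0$: since $A\mathbf 1=\mathbf 1$ for every convex combination $A$ of dilations, the required domination is equivalent to $p\bigl(\sum_kc_k\sigma_{1/2}^kg\bigr)\ge0$ for \emph{arbitrary} real coefficients $c_k$ (the $\chi$-terms being harmless), and sublinearity yields this from your inequalities only if one also knows $p(\sigma_{1/2}^kg)\le0$ --- a \emph{limsup} statement about the averages that your liminf estimate does not provide. The fix is exactly your propagation lemma: along the common sequence $t_j$ every $\sigma_{1/2}^mg$ tends to $0$, so for every $A$ and every finite real combination $y$ of the $\sigma_{1/2}^kg$ one has $(Ay)(t_j)\to0$, hence $\limsup_t(Ay)(t)\ge0$, i.e.\ $p(y)\ge0$ (the same observation settles well-definedness of $\omega_0$). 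With that restatement the remaining verifications (positivity and normalisation, dilation invariance by telescoping, support, $h$-compatibility) go through as you indicate, and your proof is correct.
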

	
	\begin{proof} (i). Suppose firstly that the space $\mathcal I_h(\cM,\tau)$ admits  a non-trivial tail-respecting functional $\phi$ supported at infinity. Note that for $A\in \mathcal I_h(\cM,\tau)$ and every $t>0$ we have that $\int_t^{\infty} \mu(s,A)\,ds < \infty$ and, so, $\int_t^{\infty} \mu(s,A)\,ds \to 0$, as $t\to\infty$.  Therefore, if $\lim_{t \to \infty}  h(t) > 0$, then by Theorem \ref{thm_main} every tail-respecting functional on $\mathcal I_h(\cM,\tau)$ is zero. Thus, $h$ must vanish at infinity. To prove \eqref{limcond1} we firstly note that 
	 by Theorem \ref{thm_main} there exists an $h$-compatible dilation invariant state $\omega$ on $L_\infty(0,\infty)$, such that $\phi=\tau_\omega$. Moreover, this $\omega$ is supported at infinity. Hence, $\omega$ vanishes on every function with compact support. Since $\omega$ is a state, it follows that it vanishes on all functions vanishing at infinity. In other words, $\omega$ is an extended limit at infinity. Hence, $\omega(f) \le \limsup_{t\to\infty} f(t)$ for every $f\in L_\infty(0,\infty)$. In particular,
		$$1 = \omega\left(t \mapsto \frac{h(2 t)}{h(t)}\right)\le \limsup_{t\to\infty} \frac{h(2 t)}{h(t)} \le 1,$$ since $h$ is decreasing. Hence, $\limsup_{t\to\infty} \frac{h(2 t)}{h(t)} = 1$, as required. 
		
		Conversely, let $h$ be a convex decreasing function vanishing at infinity and let \eqref{limcond1} be satisfied. Then, the function $s \mapsto \frac{h(s t)}{h(t)}$ is convex and decreasing. Hence, the function 
		$$\alpha(s):=\limsup_{t\to\infty} \frac{h(s t)}{h(t)}, \ s>0$$
		is convex and decreasing. Since $\alpha(1)=1$ and $\alpha(2)=1$, the convexity implies that $\alpha(s)=1$ for every $s\ge1$. In particular,
		$$\limsup_{t\to\infty} \frac{h(2^n t)}{h(t)} =1 \ \text{for every} \ n\in \N.$$
		Let us choose a sequence $t_n \to \infty$, $n\to\infty$ such that 
		$$\lim_{n\to\infty} \frac{h(2^{n+1} t_n)}{h(t_n)} =1.$$
		Without loss of generality, suppose that $t_{n+1}> 2^{n+1} t_n$. For every $t_n \le t\le 2^n t_n$ we have
		$$1 \ge \frac{h(2 t)}{h(t)} \ge \frac{h(2^{n+1} t_n)}{h(t_n)} \to 1, n \to \infty.$$
		By \cite[Theorem 17]{SUZ1} there exists a dilation invariant extended limit $\omega$ (at infinity) on $L_\infty(0,\infty)$ such that
		$$\omega\left(t \mapsto \frac{h(2 t)}{h(t)}\right) = p_D\left(t \mapsto \frac{h(2 t)}{h(t)}\right) := \lim_{t \to \infty} \sup_{a \ge 1} \frac1{\log(t)} \int_a^{at} \frac{h(2 s)}{h(s)} \, \frac{ds}{s}.$$
		On the other hand, for every $n\in \N$ we can find a subsequence $\{s_n\} \subset \{t_n\}$ such that 
		$\frac{h(2 s)}{h(s)} \ge 1- 1/n$ for every $s_n \le s\le 2^n s_n$. We have
		$$1\ge p_D\left(t \mapsto \frac{h(2 t)}{h(t)}\right) = \lim_{n \to \infty}\frac1{\log (2^n)} \int_{s_n}^{2^n s_n} \frac{h(2 s)}{h(s)} \, \frac{ds}{s} \ge \lim_{n \to \infty}(1- 1/n) =1.$$
		Thus, there exists an $h$-compatible dilation invariant state $\omega$ on $L_\infty(0,\infty)$, supported at infinity. Theorem \ref{thm_tau_omega} implies that the space $\mathcal I_h(\cM,\tau)$ admits non-zero tail-respecting functionals supported at infinity.

		The proof of second assertion is similar and is, therefore, omitted.
	\end{proof}


\begin{thebibliography}{10}
	
	\bibitem{AGPS}
	{\sc Albeverio, S., Guido, D., Ponosov, A., and Scarlatti, S.}
	\newblock Singular traces and compact operators.
	\newblock {\em Journal of Functional Analysis 137}, 2 (1996), 281 -- 302.
	
	\bibitem{CSZ}
	{\sc Cadilhac, L., Sukochev, F., and Zanin, D.}
	\newblock Lorentz-Shimogaki-Arazy-Cwikel theorem revisited.
	\newblock {\em arXiv: Functional Analysis\/} (2020).
	
	\bibitem{CPS2}
	{\sc Carey, A., Phillips, J., and Sukochev, F.}
	\newblock Spectral flow and {D}ixmier traces.
	\newblock {\em Adv. Math. 173}, 1 (2003), 68--113.
	
	\bibitem{CRSS}
	{\sc Carey, A.~L., Rennie, A., Sedaev, A., and Sukochev, F.}
	\newblock The {D}ixmier trace and asymptotics of zeta functions.
	\newblock {\em J. Funct. Anal. 249}, 2 (2007), 253--283.
	
	\bibitem{C3}
	{\sc Connes, A.}
	\newblock The action functional in noncommutative geometry.
	\newblock {\em Comm. Math. Phys. 117}, 4 (1988), 673--683.
	
	\bibitem{C}
	{\sc Connes, A.}
	\newblock {\em Noncommutative geometry}.
	\newblock Academic Press Inc., San Diego, CA, 1994.
	
	\bibitem{D}
	{\sc Dixmier, J.}
	\newblock Existence de traces non normales.
	\newblock {\em C. R. Acad. Sci. Paris S\'er. A-B 262\/} (1966), A1107--A1108.
	
	\bibitem{Dix}
	{\sc Dixmier, J.}
	\newblock {\em Les alg\`ebres d'op\'erateurs dans l'espace hilbertien
		(alg\`ebres de von {N}eumann)}.
	\newblock Gauthier-Villars \'Editeur, Paris, 1969.
	\newblock Deuxi\`eme \'edition, revue et augment\'ee, Cahiers Scientifiques,
	Fasc. XXV.
	
	\bibitem{DPSS}
	{\sc Dodds, P.~G., de~Pagter, B., Semenov, E.~M., and Sukochev, F.~A.}
	\newblock Symmetric functionals and singular traces.
	\newblock {\em Positivity 2}, 1 (1998), 47--75.
	
	\bibitem{DPS}
	{\sc Dodds, P.~G., de~Pagter, B., and Sukochev, F.~A.}
	\newblock {\em Theory of noncommutative integration}.
	\newblock De Gruyter, upcomming.
	
	\bibitem{Edwards}
	{\sc Edwards, R.~E.}
	\newblock {\em Functional analysis. {T}heory and applications}.
	\newblock Holt, Rinehart and Winston, New York, 1965.
	
	\bibitem{FK}
	{\sc Fack, T., and Kosaki, H.}
	\newblock Generalized {$s$}-numbers of {$\tau$}-measurable operators.
	\newblock {\em Pacific J. Math. 123}, 2 (1986), 269--300.
	
	\bibitem{GI95}
	{\sc Guido, D., and Isola, T.}
	\newblock Singular traces on semifinite von {N}eumann algebras.
	\newblock {\em J. Funct. Anal. 134}, 2 (1995), 451--485.
	
	\bibitem{KR1}
	{\sc Kadison, R.~V., and Ringrose, J.~R.}
	\newblock {\em Fundamentals of the theory of operator algebras. {V}ol. {I}},
	vol.~15 of {\em Graduate Studies in Mathematics}.
	\newblock American Mathematical Society, Providence, RI, 1997.
	\newblock Elementary theory, Reprint of the 1983 original.
	
	\bibitem{KR2}
	{\sc Kadison, R.~V., and Ringrose, J.~R.}
	\newblock {\em Fundamentals of the theory of operator algebras. {V}ol. {II}},
	vol.~16 of {\em Graduate Studies in Mathematics}.
	\newblock American Mathematical Society, Providence, RI, 1997.
	\newblock Advanced theory, Corrected reprint of the 1986 original.
	
	\bibitem{KSS}
	{\sc Kalton, N., Sedaev, A., and Sukochev, F.}
	\newblock Fully symmetric functionals on a {M}arcinkiewicz space are {D}ixmier
	traces.
	\newblock {\em Adv. Math. 226}, 4 (2011), 3540--3549.
	
	\bibitem{LSZ_book}
	{\sc Lord, S., Sukochev, F., and Zanin, D.}
	\newblock {\em Singular traces}, vol.~46 of {\em De Gruyter Studies in
		Mathematics}.
	\newblock De Gruyter, Berlin, 2013.
	\newblock Theory and applications.
	
	\bibitem{Ne}
	{\sc Nelson, E.}
	\newblock Notes on non-commutative integration.
	\newblock {\em J. Functional Analysis 15\/} (1974), 103--116.
	
	\bibitem{P_trI}
	{\sc Pietsch, A.}
	\newblock Traces on operator ideals and related linear forms on sequence ideals
	(part {I}).
	\newblock {\em Indag. Math. (N.S.) 25}, 2 (2014), 341--365.
	
	\bibitem{P_trII}
	{\sc Pietsch, A.}
	\newblock Traces on operator ideals and related linear forms on sequence ideals
	(part {II}).
	\newblock {\em Integral Equations Operator Theory 79}, 2 (2014), 255--299.
	
	\bibitem{P_trIII}
	{\sc Pietsch, A.}
	\newblock Traces on operator ideals and related linear forms on sequence ideals
	(part {III}).
	\newblock {\em J. Math. Anal. Appl. 421}, 2 (2015), 971--981.
	
	\bibitem{Se}
	{\sc Segal, I.~E.}
	\newblock A non-commutative extension of abstract integration.
	\newblock {\em Ann. of Math. (2) 57\/} (1953), 401--457.
	
	\bibitem{SUZ1}
	{\sc Sukochev, F., Usachev, A., and Zanin, D.}
	\newblock Generalized limits with additional invariance properties and their
	applications to noncommutative geometry.
	\newblock {\em Adv. Math. 239\/} (2013), 164--189.
	
	\bibitem{Ta1}
	{\sc Takesaki, M.}
	\newblock {\em Theory of operator algebras. {I}}.
	\newblock Springer-Verlag, New York-Heidelberg, 1979.
	
	\bibitem{Ta2}
	{\sc Takesaki, M.}
	\newblock {\em Theory of operator algebras. {II}}, vol.~125 of {\em
		Encyclopaedia of Mathematical Sciences}.
	\newblock Springer-Verlag, Berlin, 2003.
	\newblock Operator Algebras and Non-commutative Geometry, 6.
	
	\bibitem{Varga}
	{\sc Varga, J.~V.}
	\newblock Traces on irregular ideals.
	\newblock {\em Proceedings of the American Mathematical Society 107}, 3 (1989),
	715--723.
	
\end{thebibliography}

\end{document}